\begin{document}
\Title{A globally convergent gradient-like method based on the Armijo line search}

\Author{Ahmad Kamandi$^\dag$\correspond,  Keyvan Amini$^{\ddag}$}

\Address{$^{\dag}$ Department of Mathematics ,University of Science and Technology of Mazandaran, Behshahr, Iran \\ 
         $^{\ddag}$  Department of Mathematics, Razi university, Kermanshah, Iran\\}

\Email{ahmadkamandi@mazust.ac.ir, kamini@razi.ac.ir}
\Markboth{F. Author, S. Author}{An iterative method to solve ...}

\Abstract {In this paper,  a new conjugate gradient-like algorithm is proposed to solve unconstrained optimization problems. The step directions generated by the new algorithm satisfy sufficient descent condition independent of the line search. The global convergence of the new algorithm, with the Armijo backtracking line search,  is proved.  Numerical experiments indicate the efficiency and robustness of the new algorithm.
}

\Keywords{Unconstrained optimization,  Conjugate gradient algorithm, Global convergence,  Armijo condition}
\AMS{2020}{65K05 , 65K10.}

\section{Introduction}
Motivated by numerous real-world applications such as machine learning \cite{b1,b2}, big data \cite{c2}, and compressive sensing \cite{c1,s1}, it is essential to tackle large scale optimization problems and to design adapted efficient and robust algorithms, which are computationally tractable and globally convergent.

Consider the following unconstrained optimization problem,
\begin{equation}\label{e1} 
\begin{array}{ll}
\min          &~ f(x)\\
\mathrm{s.t.} &~ x\in \mathbb{R}^n,
\end{array}
\end{equation}
where $f:\mathbb{R}^n\rightarrow \mathbb{R}$ is a  continuously differentiable function.  Among numerical algorithms for solving \eqref{e1}, line search algorithms start from an initial guess, $x_0$, for the solution of the problem and generate a sequence of iterations $\{x_k\}$ by the following recurrence relation,
 \begin{equation}\label{e2} 
x_{k+1}=x_k+\alpha_k d_k,
\end{equation}
where $\alpha_k$ is the steplength and  $d_k$ is the search direction at iteration $k$. These algorithms try  to find a new iterate $x_{k+1}$ with a lower function value than $x_k$. To this aim the direction $d_k$ needed to be a downhill direction. The steepest descent  method is a well-known line search method that uses the direction $d_k=-\nabla f(x_k)$, along which $f$ decreases most rapidly, at every iteration.

Having the direction $d_k$,  the ideal choice for the steplength $\alpha_k$ would be the global minimizer of
\begin{equation}\label{e5}
f(x_k+\alpha d_k),~~~~ \alpha>0.
\end{equation}
The procedure of finding exact minimizer of \eqref{e5} is called exact line search and, in general, it is computationally expensive. 
More practical strategies perform an inexact
line search to identify a steplength that achieves adequate reductions in $f$ at minimal cost \cite{n1}. A popular inexact line search condition is,
\begin{equation}\label{e6}
f(x_k+\alpha_k d_k) \leq f(x_k)+c_1 \alpha_k d_k^Tg_k,
\end{equation}
where $g_k=\nabla f(x_k)$  and  $c_1\in (0,1)$. Inequality \eqref{e6} is sometimes called the Armijo condition. An appropriate steplength $\alpha_k$ satisfying Armijo condition can  be found  by a  so-called backtracking  approach.\\

Another important procedure to find the steplength is known as the Wolfe line search  for which the steplength $\alpha_k$ must satisfy the following conditions,
\begin{equation*}
\begin{array}{l}
f (x_k+\alpha_k d_k) \leq f(x_k)+c_1 \alpha_k d_k^Tg_k, \\
\nabla f (x_k+\alpha_k d_k) ^T d_k \geq  c_2 d_k^T g_k,
\end{array}
\end{equation*}
where $0<c_1<c_2<1$.  Strong Wolfe conditions are also a modification of the Wolfe conditions that  require $\alpha_k$ to satisfy,
\begin{equation*}
\begin{array}{l}
f (x_k+\alpha_k d_k) \leq f(x_k)+c_1 \alpha_k d_k^Tg_k,  \\
| \nabla f (x_k+\alpha_k d_k) ^T d_k |\leq  c_2 |d_k^T g_k |.
\end{array}
\end{equation*} 

It is generally accepted that the steepest descent method is badly affected by ill-conditioning. The behaviour of this method is investigated  comprehensively  for a two dimensional case in \cite{s2}. Conjugate gradient (CG) methods are an important class  of line search methods for solving \eqref{e1}; especially for the case that the dimension of the problem is large. The search direction in CG methods is determined by
\begin{equation}\label{e3}
d_k=\left\{
\begin{array}{ll}
-g_0 &~~ \textrm{if}~ k=0, \\
-g_k+ \beta_k  d_{k-1} &~~ \textrm{if}~ k \geq 1,
\end{array}  
\right.
\end{equation}
where  $\beta_k$  is a parameter that is called CG parameter and $g_k$ is the gradient of the objective function at $x_k$. 

The first nonlinear conjugate gradient method was introduced by Fletcher and Reeves in the 1960s \cite{f1}. The main difference among CG methods is in the formulas of computing their parameter for example,
\begin{equation*}
\begin{array}{ll}
\beta_k^{HS}=\frac{g_k^Ty_{k-1}}{d_{k-1}^Ty_{k-1}}~~\text{ \cite{h2}}, & \beta_k^{FR}=\frac{\|g_k\|^2}{\|g_{k-1}\|^2}~~  \text{\cite{f1}},\\
\beta_k^{PRP}=\frac{g_k^Ty_{k-1}}{\|g_{k-1}\| ^2} ~~\text{ \cite{p1}},&\beta_k^{CD}=\frac{\|g_{k}\|^2}{-d_{k-1}^Tg_{k-1}} ~~ \text{ \cite{f0}},\\
\beta_k^{LS}=\frac{g_k^Ty_{k-1}}{-d_{k-1}^Tg_{k-1}} ~~\text{ \cite{l1}},&\beta_k^{DY}=\frac{\|g_k\|^2}{d_{k-1}^Ty_{k-1}} ~~\text{ \cite{d1}},\\
\beta_k^{N}=(y_{k-1}-2d_{k-1}\frac{\|y_{k-1}\|^2}{d_{k-1}^Ty_{k-1}})^T\frac{g_k}{d_{k-1}^Ty_{k-1}} ~~\text{ \cite{h0}}.&\\
\end{array}  
\end{equation*}
where $y_{k-1}=g_k-g_{k-1}$. 

When the objective function $f$ is a strongly convex quadratic all of the above CG parameters are equivalent with an exact line search and the CG methods, obtained from them, converge to the global minimizer of $f$ in at most $n$ iterates. The global convergence properties of these  and some other CG methods  are  reviewed in  \cite{h1}.

The Wolfe and the strong Wolfe conditions play an important role in establishing the global convergence of many CG methods \cite{d1,g00,l01,l2,y1}.
But finding the steplength that satisfies these conditions, need some additional gradient evaluations. So, the Wolfe and the strong Wolfe line search are more expensive than the Armijo line search.  

Another important property of the step direction, in convergence analysis of a CG method, is  the "sufficient descent" condition. A direction $d_k$ is called a sufficient descent direction if there exists a positive parameter $c$ such that,
\begin{equation}\label{sd}
d_k^Tg_k \leq -c \|g_k\|^2.
\end{equation}

 The pioneer work about the global convergence of FR method with inexact line search was proposed by Al-Baali \cite{a1}. He proved that FR method generates sufficient descent directions and this method is globally convergent when  the steplength satisfy the strong Wolfe conditions  with $0<c_1<c_2<\frac{1}{2}$.  Liu et al. \cite{g1} and Dai and Yuan \cite{d1}
extended this result to $c_2=\frac{1}{2}$. It is shown that FR
method with the strong Wolfe line search may not be a descent direction for the case that $c_2>\frac{1}{2}$ \cite{d2}.

For FR method, neither  Armijo line search nor  Wolfe line search,  guarantee that the directions generated by this method are sufficient descent directions. In 2006 \cite{z1}, Zhang  et al.  proposed a modified FR conjugate gradient method.  In their method, the direction $d_k$ is computed by,
\begin{equation}\label{e7}
d_k=\left\{
\begin{array}{ll}
-g_0 &~~ \textrm{if}~ k=0, \\
-\theta_k g_k+ \beta_k^{FR}  d_{k-1} &~~ \textrm{if}~ k \geq 1,
\end{array}  
\right.
\end{equation}
where,
\begin{equation*}
\theta_k = \frac{d_{k-1}^Ty_{k-1}}{\|g_{k-1}\|^2},
\end{equation*}

From the definition of $d_k$ by \eqref{e7} one can easily get $g_k^Td_k=-\|g_k\|^2$.  Zhang  et al.  , proved that the
their method is globally convergent when the steplength, $\alpha_k$,  satisfies the following Armijo-type condition,
\begin{equation}\label{a1}
f(x_k+\alpha d_k) \leq f(x_k)+\sigma_1 \alpha d_k^Tg_k-\sigma_2 \alpha^2 \|d_k\|^2,
\end{equation}
where $\sigma_1$ and $\sigma_2$ are some positive constants.

In this paper,inspired by the work of Zhang  et al. \cite{z1},  we propose a new conjugate gradient-like method. The step directions generated by the new method are sufficient descent directions independent of the line search is used to compute the steplength.  The new CG-like method is globally convergent under the Armijo condition. Numerical tests indicate the efficiency of the new method in solving a collection of unconstrained test problems from CUTEst package.

The rest of this paper is organized as follows. In the next section, the new CG-like method is introduced. Global convergence of the new method is analyzed  in Section 3. Numerical results are reported in Section 4. Some conclusions are made in Section 5.
\section{The new method}
In this section, based on the pervious section discussion, we propose a new CG-like method. The sequence of iteration $x_k$ in the new method is obtained from \eqref{e2} for which the direction $d_k$ is computed by \eqref{e3}. While the parameter $\beta_k$ in the new method is computed by,
\begin{equation}\label{e9}
\beta_k^{new}=\tau \frac{\|g_k\|}{\|d_{k-1}\|},
\end{equation}
where $\tau \in (0,1)$.  Note that, by $\tau= 0$ the new method is equivalent to the steepest descent method.

 By the new parameter \eqref{e9} we follow two goals: Firstly, to obtain a computationally inexpensive conjugate gradient like direction with sufficient descent property \eqref{sd}, in order to establish the well known Zoutendijk condition \cite{z2} when the steplength satisfies the Armijo condition \eqref{e6}. Secondly, to make the norm of the direction $d_k$, obtained from the new parameter, bounded by a fixed coefficient of $\|g_k\|$ in order to derive the global convergence of the new CG-like method.

Note that, for the direction $d_k$ defined by \eqref{e3}, with the CG parameter computed by \eqref{e9}, we have,
\begin{equation*}
d_k^Tg_k=-\|g_k\|^2+\tau \frac{\|g_k\|}{\|d_{k-1}\|}d_{k-1}^Tg_k,
\end{equation*}
by the Cauchy-Schwarz inequality, it can be concluded that,
\begin{equation}\label{e10}
d_k^Tg_k \leq -\|g_k\|^2+\tau \|g_k\|^2=-(1-\tau) \|g_k\|^2.
\end{equation}
So, the new direction $d_k$ is a sufficient descent direction independent of the line search. For this direction we also have,
\begin{equation}\label{e11}
\|d_k\| \leq (1+\tau)\|g_k\|.
\end{equation}
Relations \eqref{e10} and \eqref{e11} illustrates two basic properties of the new CG-like direction which are provided by the new parameter \eqref{e9}. These properties will be used to prove the global convergence of the new method. 

 In the new CG-like method, the steplength $\alpha_k$ is determined such that satisfies the Armijo condition. To this aim, we  use a  backtracking  approach to compute the steplength.
Now we are ready to propose the algorithm of the new CG-like method\\
\rule{7.5cm}{.4pt}\\
\textbf{Algorithm 1} (New CG-like method)\\
\rule{7.5cm}{.4pt}\\
\textbf{Step 0} Given positive constants $c_1$, $\epsilon$, $\tau \in (0,1)$. Choose an initial point $x_0 \in  \mathbb{R}^n$,  compute $g_0$ and set $k=0$.\\
\textbf{Step 1} If $\|g_k\|\leq \epsilon$ stop.\\
\textbf{Step 2} If $k=0$ set $d_k=-g_k$. Otherwise, compute the parameter  $\beta_k^{new}$ from \eqref{e9} and the direction $d_k$ from \eqref{e3}.  \\
\textbf{Step 3} Compute the steplength $\alpha_k$ that satisfies \eqref{e6}.\\
\textbf{Step 4} Set $x_{k+1}=x_k+\alpha_k d_k$. Increase $k$ by one, compute $g_k$ and go to Step 1.\\
\rule{7.5cm}{.4pt}\\
\section{Convergence properties}
In this section, we analyze  the global convergence of the new CG-like method. To this aim, similar to \cite{z1},  we made the following assumptions: 
\begin{description}
\item[\textbf{(H1)}] The objective function $f(x)$ has a lower bound on the level set,
\[\mathcal{L}=\{x\in \mathbb{R}^n ~|~ f(x)\leq f(x_0),\ x_0\in \mathbb{R}^n\}.\]
\item[\textbf{(H2)}]  The objective function $f(x)$ is continuously
differentiable and its gradient is Lipschitz continuous on a neighbourhood $N$ of $\mathcal{L}$, namely, there exists a constant $L > 0$ such
that
\[\| g(x)-g(y)\|\leq L \| x-y\|,~~~~ \forall x,y \in N.\]
\end{description}
The following lemma provides a lower bound for the steplength $\alpha_k$ (generated by  Algorithm 1). The result of this lemma will be needed in the rest of this section.
\begin{lemma}\label{lm1}
Let the steplength $\alpha_k$ be generated by Algorithm 1. Then, under the assumptions H1 and H2, there exists a positive constant $C$ such that,
\begin{equation}\label{e12}
\alpha_k \geq C \frac{\|g_k\|^2}{\|d_k\|^2}.
\end{equation}
\end{lemma}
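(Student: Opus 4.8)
The plan is to distinguish two cases according to whether the backtracking loop in Step 3 terminated already at the initial trial $\bar{\alpha}$ or only after at least one reduction by $\rho$. The decisive analytic ingredient is the descent lemma, an immediate consequence of H2: whenever the relevant segment lies in $N$,
\begin{equation*}
f(x_k+\alpha d_k) \leq f(x_k)+\alpha g_k^Td_k+\frac{L}{2}\alpha^2\|d_k\|^2 .
\end{equation*}
Before splitting into cases I would record an elementary consequence of the sufficient descent property \eqref{e10}: combining $-d_k^Tg_k \geq (1-\tau)\|g_k\|^2$ with the Cauchy--Schwarz bound $|d_k^Tg_k|\leq \|d_k\|\|g_k\|$ yields $\|d_k\|\geq (1-\tau)\|g_k\|$, and hence the ratio estimate $\|g_k\|^2/\|d_k\|^2 \leq 1/(1-\tau)^2$, which will dispatch the trivial case.

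Next I would treat the main case, in which backtracking actually occurred, so that $\alpha_k=\bar{\alpha}\rho^{\,i}$ with $i\geq 1$ and the previous trial $\alpha_k/\rho$ violated the Armijo test \eqref{e6}, i.e. $f(x_k+\tfrac{\alpha_k}{\rho}d_k) > f(x_k)+c_1\tfrac{\alpha_k}{\rho}d_k^Tg_k$. Bounding the left-hand side by the descent lemma with $\alpha=\alpha_k/\rho$, cancelling $f(x_k)$, and subtracting $\tfrac{\alpha_k}{\rho}g_k^Td_k$ gives $(1-c_1)\tfrac{\alpha_k}{\rho}(-d_k^Tg_k) < \tfrac{L}{2}\tfrac{\alpha_k^2}{\rho^2}\|d_k\|^2$. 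Dividing through by $\alpha_k/\rho>0$, inserting the sufficient descent bound $-d_k^Tg_k\geq (1-\tau)\|g_k\|^2$, and solving for $\alpha_k$ produces
\begin{equation*}
\alpha_k > \frac{2\rho(1-c_1)(1-\tau)}{L}\cdot\frac{\|g_k\|^2}{\|d_k\|^2},
\end{equation*}
which is exactly \eqref{e12} with constant $\tfrac{2\rho(1-c_1)(1-\tau)}{L}$.

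It then remains to handle the case $i=0$, i.e. $\alpha_k=\bar{\alpha}$, where no failed-step information is available. Here the ratio estimate recorded above suffices, since it gives
\begin{equation*}
\bar{\alpha}=\alpha_k \geq \bar{\alpha}(1-\tau)^2\cdot\frac{\|g_k\|^2}{\|d_k\|^2},
\end{equation*}
so \eqref{e12} holds with constant $\bar{\alpha}(1-\tau)^2$. Taking
\begin{equation*}
C=\min\left\{\frac{2\rho(1-c_1)(1-\tau)}{L},\ \bar{\alpha}(1-\tau)^2\right\}
\end{equation*}
then validates \eqref{e12} uniformly in both cases, completing the argument.

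I expect the only genuine subtlety to be the bookkeeping required to cover the no-backtracking branch $\alpha_k=\bar{\alpha}$ with a single constant; the backtracking branch is the standard Armijo lower-bound estimate and is essentially mechanical once the descent lemma and \eqref{e10} are available. I would also take care to justify that the accepted iterate $x_k$ and the trial point $x_k+\tfrac{\alpha_k}{\rho}d_k$ lie in the neighbourhood $N$ so that H2 may be invoked along the connecting segment; this is the usual point where assumptions H1 and H2 (together with the monotone decrease of $f$ enforced by the Armijo acceptance) are implicitly relied upon, and it should be stated rather than glossed over.
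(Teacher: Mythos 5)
Your proof is correct and follows essentially the same route as the paper's: the same two-case split on whether backtracking occurred, the same use of the Cauchy--Schwarz bound $\|d_k\|\geq(1-\tau)\|g_k\|$ to dispatch the $\alpha_k=\bar{\alpha}$ case with $C=\bar{\alpha}(1-\tau)^2$, and the same exploitation of the failed trial $\rho^{-1}\alpha_k$ together with \eqref{e10} in the backtracking case. The only difference is cosmetic: the paper bounds $f(x_k+\rho^{-1}\alpha_k d_k)-f(x_k)$ via the mean value theorem plus Lipschitz continuity (obtaining the constant $\rho(1-c_1)(1-\tau)/L$), whereas you use the quadratic descent lemma and obtain the factor-of-two-larger constant $2\rho(1-c_1)(1-\tau)/L$; both are valid, and your closing remark about verifying that the trial points lie in the neighbourhood $N$ is a point the paper indeed glosses over.
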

\begin{proof}
There are two possible cases for the steplength $\alpha_k$ generated by  Algorithm 1,
\begin{itemize}
\item[Case 1:] $\alpha_k=\bar{\alpha}$. In this case, from \eqref{e10}, we have,
\begin{equation*}
\alpha_k \|d_k\| \|g_k\| \geq -\alpha_k d_k^Tg_k =-\bar{\alpha} d_k^Tg_k  \geq \bar{\alpha}(1-\tau) \|g_k\|^2 ,
\end{equation*} 
So, 
\begin{equation*}
\alpha_k\geq \bar{\alpha}(1-\tau) \frac{\|g_k\|}{ \|d_k\|},
\end{equation*} 
the squre of this inequality along with  $\alpha_k=\bar{\alpha}$ yeild that \eqref{e12} satisfies with $C=\bar{\alpha}(1-\tau)^2$.
\item[Case 2:] $\alpha_k<\bar{\alpha}$. In this case, by the fact that $\rho^{-1} \alpha_k$ does not satisfy \eqref{e6}, we have,
\begin{equation}\label{e13}
f (x_k+\rho^{-1}\alpha_k d_k)> f(x_k)+c_1 \rho^{-1} \alpha_k d_k^Tg_k,
\end{equation}
by the mean value theorem there exists a $t_k \in (0,1)$ such that
\begin{equation}
\begin{array}{ll}
f (x_k+\rho^{-1}\alpha_k d_k)- f(x_k)&\\
=\rho^{-1} \alpha_k \nabla f (x_k+t_k \rho^{-1} \alpha_k d_k)^T d_k&\\
=\rho^{-1} \alpha_k d_k^T g_k+\rho^{-1} \alpha_k[\nabla f (x_k+t_k \rho^{-1} \alpha_k d_k)-g_k]^T d_k.
\end{array}
\end{equation}
This equation along with the assumption H2 result that,
 \begin{equation}\label{e14}
f (x_k+\rho^{-1}\alpha_k d_k)- f(x_k) \leq \rho^{-1} \alpha_k d_k^T g_k+L \rho^{-2} \alpha_k^2 \|d_k\|^2 .
\end{equation}
From \eqref{e10}, \eqref{e13} and \eqref{e14}  we can conclude that,
\begin{equation*}
\alpha_k > \frac{\rho (1-c_1)(1-\tau)\|g_k\|^2}{L \|d_k\|^2}
\end{equation*}
This inequality means that \eqref{e12} satisfies with $C=\frac{\rho (1-c_1)(1-\tau)}{L}$.
\end{itemize}
By the two above cases, the inequality \eqref{e12} is always valid with $$C=min\{\bar{\alpha}(1-\tau)^2,\frac{\rho (1-c_1)(1-\tau)}{L}\}$$
So, the proof is compeleted.
\end{proof}
The next lemma is known as Zoutendijk condition \cite{z2}.
\begin{lemma}
 Suppose that H1 and H2 hold and $d_k$ is generated by  Algorithm 1, then 
\begin{equation}\label{e15}
\sum_{k=0}^{\infty} \frac{\|g_k\|^4}{\|d_k\|^2} < \infty.
\end{equation}
\end{lemma}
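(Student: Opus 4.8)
The plan is to prove the Zoutendijk condition \eqref{e15} by combining the Armijo sufficient-decrease inequality \eqref{e6} with the lower bound on the steplength from Lemma \ref{lm1}, and then summing over all iterations. First I would observe that since every $\alpha_k$ satisfies the Armijo condition \eqref{e6}, we have
\begin{equation*}
f(x_{k+1}) \leq f(x_k) + c_1 \alpha_k d_k^T g_k.
\end{equation*}
Because $d_k$ is a descent direction by \eqref{e10}, the term $\alpha_k d_k^T g_k$ is negative, so this shows $\{f(x_k)\}$ is monotonically decreasing; together with the lower bound from assumption H1, the sequence converges and the telescoping sum $\sum_{k=0}^{\infty} \bigl(f(x_k)-f(x_{k+1})\bigr)$ is finite. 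Rearranging the Armijo inequality gives $-c_1 \alpha_k d_k^T g_k \leq f(x_k)-f(x_{k+1})$, so summing yields $\sum_{k=0}^{\infty} \bigl(-\alpha_k d_k^T g_k\bigr) < \infty$.

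Next I would feed in the two structural bounds proved earlier. From the lower bound \eqref{e12} in Lemma \ref{lm1}, $\alpha_k \geq C \|g_k\|^2/\|d_k\|^2$, and from the sufficient descent property \eqref{e10}, $-d_k^T g_k \geq (1-\tau)\|g_k\|^2$. Multiplying these two estimates produces
\begin{equation*}
-\alpha_k d_k^T g_k \geq C(1-\tau) \frac{\|g_k\|^2}{\|d_k\|^2}\,\|g_k\|^2 = C(1-\tau)\frac{\|g_k\|^4}{\|d_k\|^2}.
\end{equation*}
Combining this with the finiteness of $\sum_k \bigl(-\alpha_k d_k^T g_k\bigr)$ established above gives
\begin{equation*}
C(1-\tau)\sum_{k=0}^{\infty} \frac{\|g_k\|^4}{\|d_k\|^2} \leq \sum_{k=0}^{\infty} \bigl(-\alpha_k d_k^T g_k\bigr) < \infty,
\end{equation*}
and since $C(1-\tau)>0$ is a fixed positive constant, \eqref{e15} follows immediately.

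I do not anticipate a serious obstacle here, since both of the key ingredients are already available: the steplength lower bound is exactly Lemma \ref{lm1}, and the sufficient descent inequality \eqref{e10} is established in Section 2. The only point requiring a little care is the very first step — justifying that the summed decrease is finite, which rests on H1 guaranteeing $f$ is bounded below on $\mathcal{L}(x_0)$ together with the observation that all iterates remain in the level set $\mathcal{L}(x_0)$ (this holds because each Armijo step strictly decreases $f$, so $f(x_k)\leq f(x_0)$ for every $k$). Once the telescoping argument is set up correctly, the remainder is a direct substitution of the two inequalities and no delicate estimation is needed.
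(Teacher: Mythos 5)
Your proof is correct and follows essentially the same route as the paper: telescope the Armijo decrease using H1 to get $\sum_k(-\alpha_k d_k^Tg_k)<\infty$, then substitute the steplength bound of Lemma \ref{lm1} and the sufficient descent inequality \eqref{e10}. If anything, your write-up is slightly cleaner than the paper's, which omits the constant $c_1$ in the summed inequality and cites \eqref{e11} where \eqref{e12} is what is actually used.
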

\begin{proof}
From \eqref{e6}  for any $k \geq 0$ we have,
\begin{equation}\label{e16}
\sum_{j=0}^{k} -\alpha_j d_j^Tg_j \leq f(x_0)-f(x_{k+1}),
\end{equation}
By the fact that the direction $d_k$ is a sufficient descent direction and the Armijo condition is valid for each iteration, the sequence $\{f(x_k)\}$ is decreasing. So, the assumption H1 results that this sequence is convergent. Therefore, by taking limit from the inequality  \eqref{e16}, when ~$k \rightarrow \infty$, we have,  
\begin{equation}\label{ee}
\sum_{j=0}^{\infty} -\alpha_j d_j^Tg_j< \infty.
\end{equation}

From the inequalities \eqref{ee}, \eqref{e10} and \eqref{e12} we have,
\begin{equation}
\sum_{j=0}^{\infty} C (1-\tau) \frac{\|g_j\|^4}{\|d_j\|^2}< \infty.
\end{equation}
So, the proof is completed.
\end{proof}
Now, by the inequality \eqref{e15} and  \eqref{e11} we can easily conclude that,
\begin{equation}\label{eg}
\sum_{k=0}^{\infty} \|g_k\|^2< \infty.
\end{equation}
The inequality \eqref{eg}  results the global convergence of the new CG-like method. 
\begin{theorem}
 Suppose that H1 and H2 hold and the sequence $\{x_k\}$ is generated by  Algorithm 1, then
\begin{equation*}
\lim_{k \rightarrow \infty} \|g_k\|=0.
\end{equation*} 
\end{theorem}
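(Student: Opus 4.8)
The plan is to combine the Zoutendijk-type bound \eqref{e15} with the upper bound \eqref{e11} on the search direction to force the summability of $\|g_k\|^2$, and then to invoke the elementary necessary condition for convergence of a series. Essentially all of the analytic work has already been absorbed into the two preceding lemmas, so this final step reduces to a short algebraic manipulation.

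First I would square the bound \eqref{e11} to obtain $\|d_k\|^2 \leq (1+\tau)^2 \|g_k\|^2$. Inverting this (which is legitimate for every iterate before termination, where $g_k \neq 0$) bounds each summand of \eqref{e15} from below, namely
\begin{equation*}
\frac{\|g_k\|^4}{\|d_k\|^2} \geq \frac{\|g_k\|^4}{(1+\tau)^2 \|g_k\|^2} = \frac{1}{(1+\tau)^2}\,\|g_k\|^2 .
\end{equation*}
Summing over $k$ and applying \eqref{e15} would then give
\begin{equation*}
\frac{1}{(1+\tau)^2}\sum_{k=0}^{\infty}\|g_k\|^2 \;\leq\; \sum_{k=0}^{\infty}\frac{\|g_k\|^4}{\|d_k\|^2} \;<\; \infty ,
\end{equation*}
so that $\sum_{k=0}^{\infty}\|g_k\|^2 < \infty$.

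To close the argument I would observe that, since the series $\sum_{k}\|g_k\|^2$ converges, its general term must tend to zero; hence $\|g_k\|^2 \to 0$ and therefore $\|g_k\| \to 0$ as $k \to \infty$, which is the desired conclusion.

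Because the entire chain is elementary, there is no genuine obstacle to overcome here. The one point deserving emphasis is the passage from the Zoutendijk sum \eqref{e15} to a lower bound involving $\|g_k\|^2$: this step hinges entirely on the uniform linear bound \eqref{e11} tying $\|d_k\|$ to $\|g_k\|$. That bound is exactly the structural property the new parameter \eqref{e9} was designed to supply, and—together with the line-search-independent sufficient descent estimate \eqref{e10}—it is what allows the plain Armijo backtracking to suffice in place of the more expensive Wolfe-type conditions.
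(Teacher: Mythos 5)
Your proposal is correct and follows exactly the paper's (very terse) argument: the paper likewise combines \eqref{e15} with \eqref{e11} to deduce $\sum_{k=0}^{\infty}\|g_k\|^2<\infty$ and hence $\|g_k\|\to 0$. Your write-up merely makes explicit the squaring of \eqref{e11} and the divergence test that the paper leaves implicit.
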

\section{Numerical Results}
In this section, we report some numerical experiments that indicate the efficiency of Algorithm 1. To this aim we implement  Algorithm 1 (NEW) ,  The conjugate gradient algorithm with the CG parameter proposed by Hager and Zhang \cite{h0} (HZ),  Fletcher and Reeves (FR) algorithm and the modified Fletcher and Reeves (MFR) algorithm \cite{z1} in MATLAB environment on a laptop (CPU Corei7-2.5 GHz, RAM 12 GB) and compare their results  obtained from solving a collection of 243 unconstrained optimization test problems from CUTEst collection \cite{g0}. The test problems  and their dimension are listed in Table 1. 

For all the considered algorithms we set $\alpha_k=max\{\bar{\alpha}\rho^i : i=0,1,2,...\}$ which satisfies Armijo line search condition.  For the initial steplength $\bar{\alpha}$, similar to \cite{z1}, we set   $\bar{\alpha}=\frac{s_{k-1}^Ts_{k-1}}{s_{k-1}^Ty_{k-1}}$ if $s_{k-1}^Ty_{k-1}>10^{-8}$ and $\bar{\alpha}=1$ otherwise, where $s_{k-1}=g_k-g_{k-1}$. The other parameters are chosen as $\rho=0.5$, $c_1=10^{-4}$ and  $\tau= 0.002$.

Note that, in order to find an appropriate value for the parameter $\tau$, we tested the numerical behavior of the new algorithm for different value of this parameter.  Among  \{0, 0.001, 0.002, 0.003, 0.004, 0.005, 0.01, 0.02, 0.03, 0.04, 0.05, 0.1, 0.2, 0.3, 0.4, 0.5, 0.6, 0.7, 0.8, 0.9\}, the best result is obtained for $\tau= 0.002$.

\renewcommand{\arraystretch}{1.1}
{\small
\begin{longtable}{llll}
\multicolumn{2}{l}%
{Table 1. List of test problems}\\[5pt]
\hline 
\multicolumn{1}{l}{Problem name} &\multicolumn{1}{c}{Dim}&
\multicolumn{1}{l}{Problem name} &\multicolumn{1}{c}{Dim}\\
\hline
\endfirsthead
\multicolumn{2}{l}%
{Table 1. (\textit{continued})}\\[5pt]
\hline
\endhead
\hline
\endfoot
\endlastfoot
ARGLINB  & 50,100,200 & ARGLINC   &50,100,200 \\
BDQRTIC & 100,500,1000,5000 & BROWNAL & 100,200,1000\\
BRYBND  & 50,100,500 & CHNROSNB &  50\\
CHNRSNBM & 50& EIGENALS &110\\
EIGENBLS & 110&ERRINROS& 50\\
ERRINRSM &50 &EXTROSNB &100,1000\\
FREUROTH & 50,100,500,1000,5000 & LIARWHD  & 100,500,1000,5000\\
MANCINO  & 50,100 & MODBEALE & 200,2000\\
MSQRTALS & 100 & MSQRTBLS &100\\
NONDIA   & 50,90,100,500,1000,5000&NONSCOMP &50,100,500,1000,5000\\
OSCIGRAD & 100,1000 & OSCIPATH &100,500\\
PENALTY1  &50,100,500,1000 & PENALTY2 & 50,100,200 \\
SPMSRTLS & 100,499,1000,4999 & SROSENBR  &50,100,500,1000,5000\\
TQUARTIC  &50,100,500,1000,5000 &VAREIGVL & 50,100,500,1000,5000\\
WOODS  &  100,1000,4000 & ARWHEAD &100,500,1000,5000\\
BOX   &   100 & BOXPOWER &100,1000 \\
BROYDN7D &  50,100,500,1000 & COSINE &   100,1000 \\
CRAGGLVY & 50,100,500,1000,5000 & DIXMAANA  & 90,300,1500,3000 \\
DIXMAANC &  90,300,1500,3000 & DIXMAAND  & 90,300,1500,3000 \\
DIXMAANE & 90,300,1500,3000 &DIXMAANF & 90,300,1500,3000 \\
DIXMAANG &  90,300,1500,3000 &DIXMAANH  &90,300,1500,3000\\
DIXMAANI  &90,300,1500,3000 &DIXMAANJ & 90,300,1500,3000 \\
DIXMAANK  & 90,300,1500,3000 & DIXMAANL & 90,300,1500,3000\\
DIXMAANM & 90,300,1500,3000 & DIXMAANN & 90,300,1500,3000 \\
DIXMAANO & 90,300,1500,3000 & DIXMAANP  &90,300,1500,3000 \\
DQRTIC   & 50,100,500,1000, 5000 &EDENSCH& 2000\\
ENGVAL1  & 50, 100,1000,5000 & FLETCHCR  & 1000 \\
FMINSURF &  64,121,961,1024 & INDEFM &   50 \\
NCB20B  & 50,1000, 2000 & NONCVXU2 & 100,1000,5000 \\
NONCVXUN & 100,1000,5000 & NONDQUAR & 100,1000,5000\\
PENALTY3 & 50,100 & POWELLSG  & 60,80,100,500,1000,5000 \\
POWER  &  50,75,100,500,1000,5000 & QUARTC &  100,500,1000,5000 \\
SCHMVETT  & 100,500,1000,5000 & SINQUAD &  50,100\\
SPARSINE  & 50,100 & SPARSQUR & 50,100,1000,5000 \\
TOINTGSS  & 50,100,500,1000,5000 & VARDIM    & 50,100,200\\
DIXON3DQ & 100 & DQDRTIC  & 50,100,500,1000,5000 \\
HILBERTB &  50 & TESTQUAD &1000,5000\\
TOINTQOR  &50 &TRIDIA  &  50,100,500,1000,5000\\

\hline
\end{longtable}
}

  In our experiments the stopping tolerance for all of the algorithms is
\begin{equation*}
\epsilon= 10^{-6}\|g_0\|.
\end{equation*}
Also, a failure is reported when,  the total number of iterations exceeds 4000 (Case i ) or  the steplength $\alpha_k$ become less than $eps/10$ (Case ii ). Where $eps$ is the floating-point relative accuracy in MATLAB software. In the results of 243 problems 74,6,50 and 7 failures are reported for  the FR, the New, the MFR and the HZ algorithms respectively.  Where many of the failures in the FR are related to  Case ii,  while for the others the failures are related to  Case i.

To visualize the whole behavior of the algorithms, we use the performance profiles proposed by
Dolan and More \cite{d3}. The total number of function evaluations, the total number of iterations  and the running time of each algorithm are considered  as performance indexes. Note that at each iteration of the considered algorithms the gradient of the objective function is computed just one time, so the total number of iterations and the total number of the gradient evaluations are the same.

 Fig.1  illustrates the performance profile of the algorithms, where the performance index is the total number of function evaluations. It can be seen that the NEW is the best algorithm with probability around 60\%, while the probability of solving a problem as the best algorithm are around 27\%,  18\%  and 8\% for the HZ, the MFR and the FR respectively.  
 
 The performance index in Fig.2  is  the total number of iterations. From this figure we observe that the NEW obtains the most wins on approximately 55\% of all test problems an the probability of being best algorithm is 30\%,  25\% and 8\% for  the HZ, the MFR and the FR respectively.
 
  \begin{figure} [ht]
 \begin{center}
   \includegraphics[width=6.5cm]{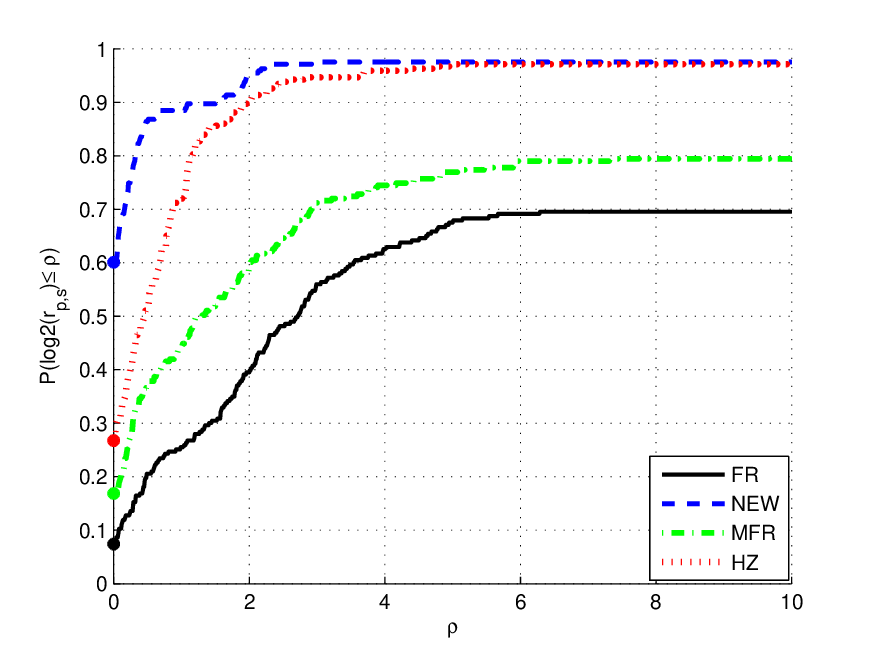}\\
  \vspace{-.3cm}
 \caption{Performance profiles for the number of function evaluations}
 \end{center}
\end{figure}
 
 \begin{figure} [ht]
 \begin{center}
   \includegraphics[width=6.5cm]{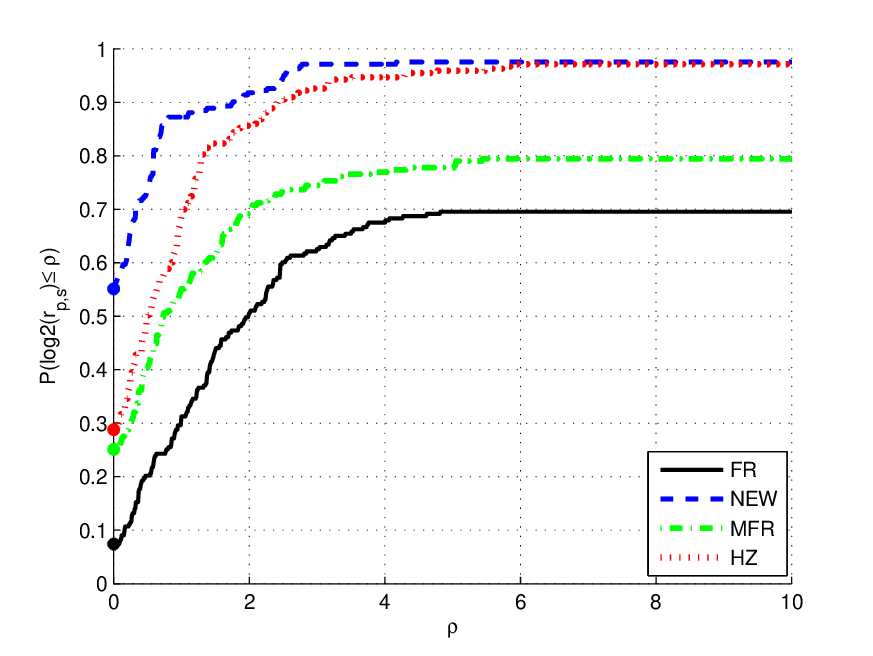}\\
  \vspace{-.3cm}
 \caption{Performance profiles for the number of iterations}
 \end{center}
\end{figure}

\begin{figure} [ht]
 \begin{center}
   \includegraphics[width=6.5cm]{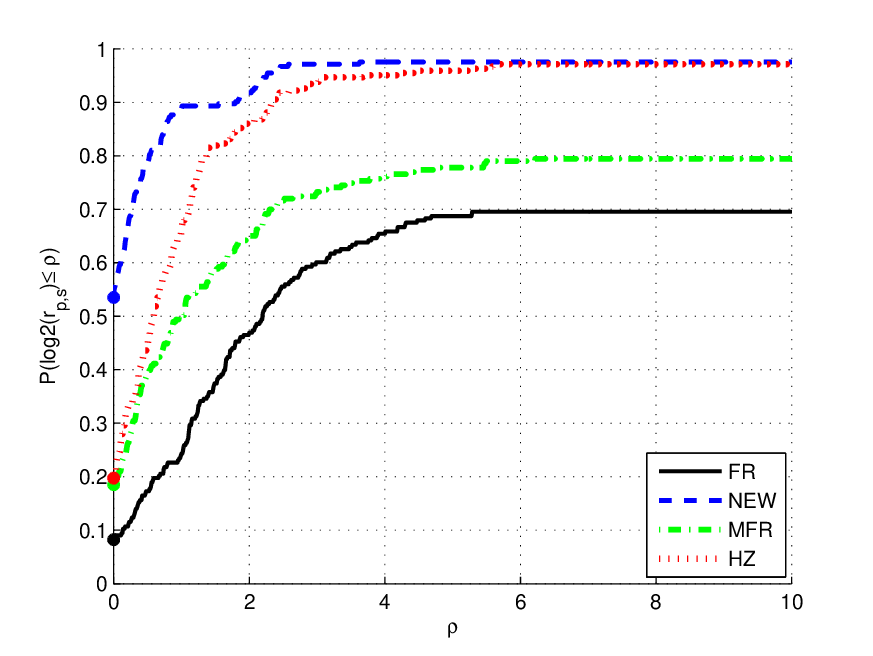}\\
  \vspace{-.3cm}
\caption{Performance profiles for the running times}
 \end{center}
\end{figure}
 
  The performance profiles for the running times are illustrated in Fig.3. From this figure, it can be observed that the NEW is the best algorithm. Another important factor of these three figures is that the graph of the NEW algorithm grows up faster than the other algorithms. 
  
 We also compare the mentioned algorithms in solving a set of convex quadratic problems from CUTEst package which are classified as QUR2. With the exact linesearch, as we expected, the FR and HZ methods perform the same and their results were much better than the NEW and the MFR. But, with the Armijo linesearch, the results were different.

 The graphs of Fig.4 ,Fig.5 and Fig.6  illustrate the performance of the four methods in solving 15 convex quadratic test problems which are listed in Table 2.  These graphs indicate that the new methods is efficient than the other method in solving convex quadratic test problems, when the step length is computed by the Armijo line search.
  
  From the presented results, we can conclude that the NEW algorithm is more efficient than the HZ, the MFR and the FR algorithms in solving unconstrained optimization problems.

 \begin{figure} [ht]
 \begin{center}
   \includegraphics[width=6.5cm]{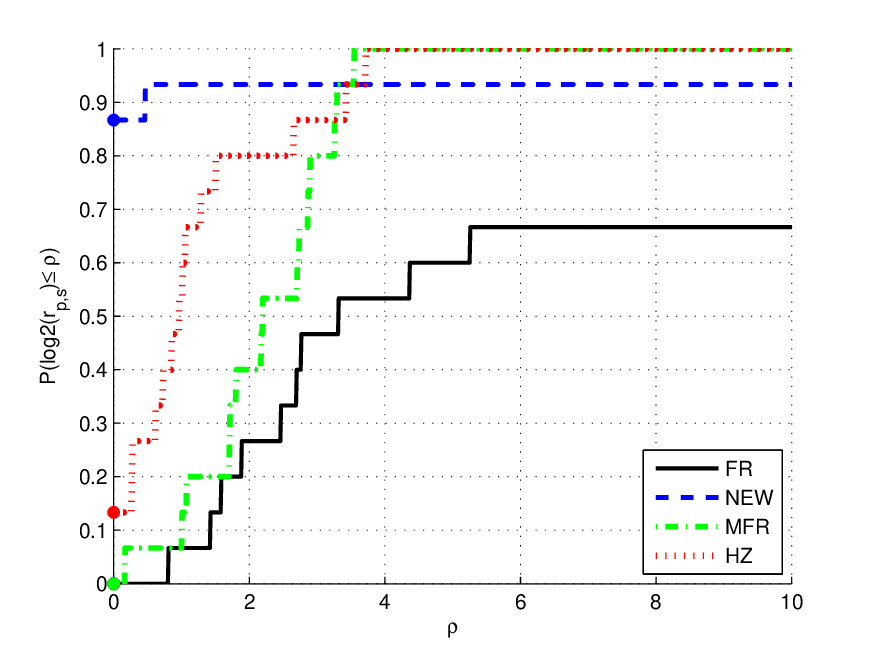}\\
 \vspace{-.3cm}
\caption{Performance profiles for the number of function evaluations (convex quadratic test problems)}
 \end{center}
\end{figure}

\begin{figure} [ht]
 \begin{center}
   \includegraphics[width=6.5cm]{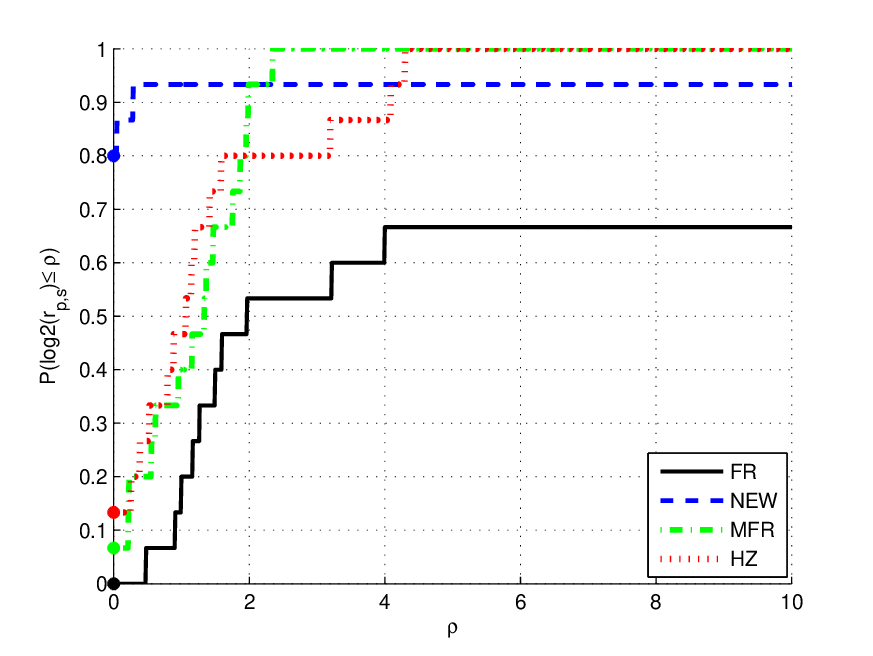}\\
  \vspace{-.3cm}
\caption{Performance profiles for  the number of iterations (convex quadratic test problems)}
 \end{center}
\end{figure}

\begin{figure} [ht]
 \begin{center}
   \includegraphics[width=6.5cm]{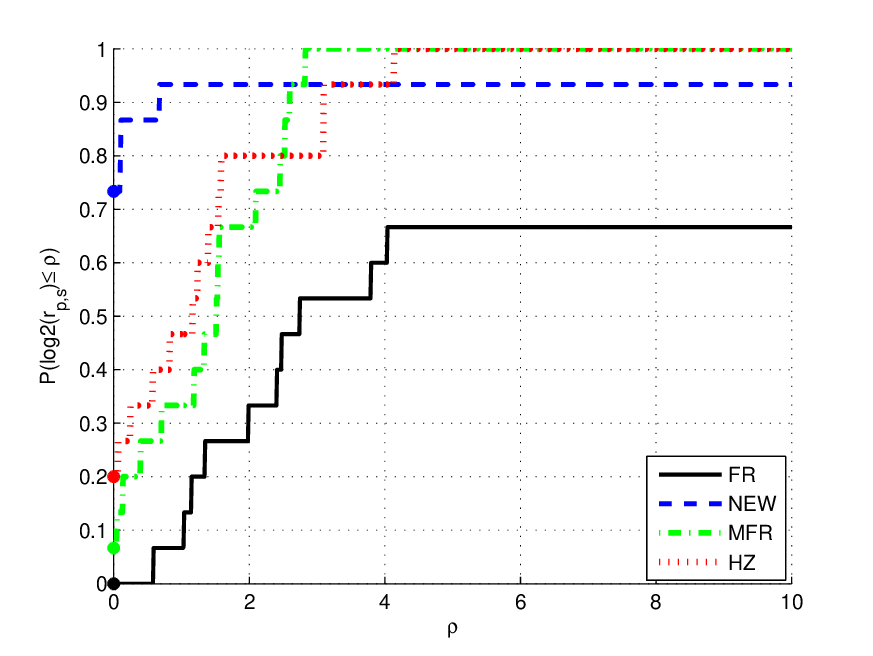}\\
  \vspace{-0.3cm}
\caption{Performance profiles for the running times (convex quadratic test problems)}
 \end{center}
\end{figure}

 \renewcommand{\arraystretch}{1.1}
{\small
  \begin{longtable}{llll}
\multicolumn{2}{l}%
{Table 2. List of convex quadratic test problems}\\[5pt]
\hline 
\multicolumn{1}{l}{Problem name} &\multicolumn{1}{l}{Dim}&
\multicolumn{1}{l}{Problem name} &\multicolumn{1}{l}{Dim}\\
\hline
\endfirsthead
DIXON3DQ  & 100 & TESTQUAD   &1000,5000 \\
DQDRTIC & 50,100,500,1000,5000 & TOINTQOR & 50\\
HILBERTB  & 50 & TRIDIA &  50,100,500,1000,5000\\
\hline
\end{longtable}
}

\section{Conclusion}
In this paper, we propose a new conjugate gradient-like algorithm.  The step directions generated by the new algorithm satisfies sufficient descent condition independent of line search. The global convergence of the new algorithm with the Armijo backtracking line search is investigated under some mild assumptions.  Numerical experiments showed the efficiency and robustness of the new algorithm for solving a collection of unconstrained optimization problems from CUTEst package. 

\newpage

\section*{Acknowledgements}

\enddocument